\newtheorem*{rep@theorem}{\rep@title}
\newcommand{\newreptheorem}[2]{%
\newenvironment{rep#1}[1]{%
 \def\rep@title{#2 \ref{##1}}%
 \begin{rep@theorem}}%
 {\end{rep@theorem}}}
\newtheorem*{rep@corollary}{\rep@title}
\newcommand{\newrepcorollary}[2]{%
\newenvironment{rep#1}[1]{%
 \def\rep@title{#2 \ref{##1}}%
 \begin{rep@corollary}}%
 {\end{rep@corollary}}}
\newtheorem{theorem}{Theorem}[section]
\newtheorem{lemma}[theorem]{Lemma}
\newtheorem{corollary}[theorem]{Corollary}
\newtheorem{proposition}[theorem]{Proposition}
\newtheoremstyle{customrem}% <name>
{3pt}% <Space above>
{3pt}% <Space below>
{}% <Body font>
{}% <Indent amount>
{\bfseries}% <Theorem head font>
{.}% <Punctuation after theorem head>
{.5em}% <Space after theorem headi>
{}
\theoremstyle{customrem}
\newtheorem{remark}[theorem]{Remark}
\numberwithin{equation}{section}
\begin{document}

\title[Tribonacci numbers and primes of the form $p=x^2+11y^2$.]%
{Tribonacci numbers and primes of the form $p=x^2+11y^2$.}
\author[Tim Evink \and Paul Alexander Helminck]%
{Tim Evink* \and Paul Alexander Helminck**}

\newcommand{\acr}{\newline\indent}

\address{\llap{*\,}Johann Bernoulli Instituut voor Wiskunde en Informatica \acr
                   Faculteit Wiskunde en Natuurwetenschappen (FWN)\acr
                   Nijenborgh 9\acr
                   9747 AG Groningen\acr
                   NETHERLANDS}
\email{t.evink.1@student.rug.nl}

\address{\llap{**\,}ALTA, Institute for Algebra, Geometry, Topology and their Applications\acr
                    University of Bremen,\acr
                    Postfach 330440 \acr
                    D-28334, Bremen\acr
                    GERMANY}
\email{helminck@uni-bremen.de}
%%\acr is not required (if you do not need to see a column);
%%in our style \\ makes a column automatically
\subjclass[2010]{Primary 11R04, 11R37; Secondary 11R16, 11R32} %Secondary is optional
\keywords{Tribonacci numbers, Class field theory, Galois theory, Cubic polynomials, Sequences, Modular forms}
\begin{abstract}
In this paper we show that for any prime number $p$ not equal to $11$ or $19$, the Tribonacci number $T_{p-1}$ is divisible by $p$ if and only if $p$ is of the form $x^2+11y^2$. We first use class field theory on the Galois closure of the number field corresponding to the  polynomial $x^3-x^2-x-1$ to give the splitting behavior of primes in this number field. After that, we apply these results to the explicit exponential formula for $T_{p-1}$. We also give a connection between the Tribonacci numbers and the Fourier coefficients of the unique newform of weight $2$ and level $11$. 
\end{abstract}
\maketitle
\section{Introduction}
Let $(T_{n})$ be the Tribonacci sequence, defined by the recurrence relation 
\begin{equation}
T_{n+3}=T_{n+2}+T_{n+1}+T_{n}
\end{equation}
and the initial values $T_{0}=0$, $T_{1}=1$ and $T_{2}=1$. 
In this paper we will prove the following theorem:
\begin{theorem}\label{MainTheorem}
Let $p$ be a prime number not equal to $11$ or $19$. Then $T_{p-1}$ is divisible by $p$ if and only if $p=x^2+11y^2$ for $x,y\in\mathbb{Z}$.
\end{theorem}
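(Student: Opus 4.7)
The plan is to identify the Galois closure $L$ of $f(x)=x^3-x^2-x-1$ over $\mathbb{Q}$ with the ring class field $H_2$ of conductor $2$ over $F=\mathbb{Q}(\sqrt{-11})$, thereby translating ``$p=x^2+11y^2$'' into ``$\mathrm{Frob}_p=e$ in $\mathrm{Gal}(L/\mathbb{Q})\cong S_3$''. The divisibility side is then controlled by plugging the Frobenius data into a Binet-type formula for $T_{p-1}$ and comparing the three conjugacy classes of $S_3$.

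First I would compute $\mathrm{disc}(f)=-44$, from which $\mathcal{O}_K=\mathbb{Z}[\alpha]$ for $K=\mathbb{Q}(\alpha)$ (a cubic cannot contain the quadratic subfield that an index-$2$ inclusion would produce) and $F=\mathbb{Q}(\sqrt{-11})\subset L$. Since $F$ has class number $1$, the standard ring class number formula gives $[H_2:F]=3$, hence $[H_2:\mathbb{Q}]=6=[L:\mathbb{Q}]$. A local analysis at the ramified primes $2$ and $11$ should show that $L/F$ is unramified outside primes above $2$, so $L\subseteq H_2$; equality then follows by the degree count. Cox's classical theorem on primes of the form $x^2+ny^2$ will then yield: for $p\nmid 22$, one has $p=x^2+11y^2$ iff $p$ splits completely in $L$.

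Next I would set up the Binet formula $T_n=A\alpha^n+B\beta^n+C\gamma^n$, solving the Vandermonde system on $(T_0,T_1,T_2)=(0,1,1)$ to obtain $A=\alpha/f'(\alpha)$ and cyclic analogues for $B,C$; the key identity is $A+B+C=T_0=0$. For $p\nmid 22$ this formula reduces correctly modulo a prime of $L$ above $p$. If $\mathrm{Frob}_p=e$ then $\alpha,\beta,\gamma\in\mathbb{F}_p$ and each $\alpha^{p-1}\equiv 1$, giving $T_{p-1}\equiv A+B+C\equiv 0\pmod p$. If $\mathrm{Frob}_p$ is a transposition fixing $\alpha$, substituting $\beta^{p-1}=\gamma/\beta$ and $\gamma^{p-1}=\beta/\gamma$ and using $\alpha+\beta+\gamma=1$ collapses the sum to $T_{p-1}\equiv(3\alpha-1)/f'(\alpha)$, which vanishes iff $f(1/3)=-38/27\equiv 0\pmod p$, i.e.\ $p=19$. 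If $\mathrm{Frob}_p$ is a $3$-cycle, a symmetric simplification using Newton's identities on $\sum\alpha^2-\sum\alpha\beta=4$ gives $T_{p-1}\equiv 4/((\alpha-\beta)(\alpha-\gamma)(\beta-\gamma))=\pm 2/\sqrt{-11}\pmod p$, which is nonzero for $p\ne 11$.

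The main obstacle is the identification $L=H_2$: the degree match is immediate, but verifying that the conductor is exactly $2$ requires controlling ramification at $11$, where both $F/\mathbb{Q}$ and $K/\mathbb{Q}$ are already ramified; one must show that the residual ramification above $11$ in $L/F$ is trivial, most cleanly by examining inertia subgroups inside $S_3$ together with the factorization of $f$ modulo $11$. Once $L=H_2$ is in place, the rest is a controlled symbolic simplification from which the two genuine exceptions $p=11$ (ramification) and $p=19$ (where $1/3$ becomes a root of $f$ modulo $p$) fall out organically.
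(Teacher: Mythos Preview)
Your proposal is correct and follows essentially the same route as the paper: identify the Galois closure $L$ with the class field of conductor $2$ over $\mathbb{Q}(\sqrt{-11})$ (the paper uses the ray class field, you the ring class field---these coincide here since both have degree $3$ over $F$), and then feed the Frobenius action into the Binet formula, running through the three conjugacy classes of $S_3$ to produce exactly the obstructions $38$ (transposition case) and $4$ (three-cycle case) that the paper finds. One small slip to fix: an index-$2$ inclusion $\mathbb{Z}[\alpha]\subset\mathcal{O}_K$ of orders has nothing to do with quadratic subfields, so your parenthetical justification for $\mathcal{O}_K=\mathbb{Z}[\alpha]$ is spurious; the paper instead checks $2$-regularity via $f\equiv(x-1)^3\bmod 2$ together with $f(1)=-2\notin 4\mathbb{Z}$.
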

The extra conditions given by $p\neq 11,19$ are indeed necessary, since $T_{10}=149$ is not divisible by $11$ and $11$ is of the form $x^2+11y^2$, and $T_{18}=19513=19\cdot{1027}$ is divisible by $19$, even though $19$ is not of the form $x^2+11y^2$. The authors are under the impression that this second exception has no deeper number-theoretic relevance and deem it to be an accident of sorts. Indeed, its sudden appearance in the proof of Theorem \ref{MainTheorem} will probably make this point of view clear. The appearance of $11$ in this Theorem however \emph{is} natural, as it is one of the ramifying primes in the extension $\mathbb{Q}\subset{\mathbb{Q}(\alpha)}$, where $\alpha$ is a root of $x^3-x^2-x-1$. 

There is an analogous statement for Fibonacci sequences, whose proof can be found in \cite[Lehrsatz, Page 76]{crelle1846journal} or \cite[Volume 1, Chapter XVII]{Dickson1}. For a prime number $p$ not equal to $2$ or $5$, it states that $F_{p-1}$ is divisible by $p$ if and only if $5$ is a square in $\mathbb{F}_{p}$, which happens exactly when $p\equiv{1,4}\bmod{5}$. Note that $5$ %in this statement %does bear some number-theoretic significance, as it 
is the only ramifying prime in the extension $\mathbb{Q}\subset{\mathbb{Q}(\sqrt{5})}$. 

The proof of Theorem \ref{MainTheorem} uses class field theory to give the splitting behavior of primes in the number field $\mathbb{Q}(\alpha)$, where $\alpha$ is a zero of $f=x^3-x^2-x-1$. Here we find that $p$ splits completely in $\mathbb{Q}(\alpha)$ if and only if $p=x^2+11y^2$. In other words, $f\text{ mod }{p}$ has three distinct zeros in $\mathbb{F}_{p}$ if and only if $p=x^2+11y^2$. We then use this criterion on the known explicit formula for the Tribonacci sequence in terms of the zeros of $f=x^3-x^2-x-1$. These formulas are exponential of nature, allowing us to reduce them using Frobenius. 

For the \emph{Tetranacci numbers}, defined by $T_{n+4}=T_{n+3}+T_{n+2}+T_{n+1}+T_{n}$ and $T_{0}=0$, $T_{1}=1$, $T_{2}=1$ and $T_{3}=2$, the resulting polynomial $f=x^{4}-x^{3}-x^{2}-x-1$ has Galois group $S_{4}$. Since $S_{4}$ is solvable, this means that one can again use class field theory on the intermediate fields. For the \emph{Pentanacci} numbers (which are defined in a similar way), the resulting polynomial has Galois group $S_{5}$, which is not solvable, meaning that one cannot use class field theory here.

We have also included a connection between Theorem \ref{MainTheorem} and the Fourier coefficients of the unique newform of weight $2$ and level $11$ in Section \ref{Modularforms}. This newform is given explicitly by
\begin{equation}
f=q{\prod_{n=1}^{\infty}(1-q^n)^2(1-q^{11n})^2},
\end{equation}
where $q=e^{2\pi{i}\tau}$. The idea here is that the fixed field of the kernel of the representation of the absolute Galois group on the $2$-torsion of the modular curve $X_{0}(11)$ is the same as the number field $\mathbb{Q}(\alpha)$. Using some results on elliptic curves, this then gives a $\bmod\,{2}$-criterion on the Fourier coefficients of $f$ for a prime to split in the number field $\mathbb{Q}(\alpha)$.
The final result is as follows.
\begin{corollary}
\label{CorollaryMain}
Let $p$ be a prime not equal to $2,11$ or $19$, let $T_{n}$ be the $n$-th Tribonacci number and let $a_{n}(f)$ be the $n$-th Fourier coefficient of the unique newform of weight $2$ and level $11$. Then the following are equivalent:
\begin{enumerate}
\item $T_{p-1}\equiv{0}\bmod{p}$.
\item There exist $x$ and $y$ in $\mathbb{Z}$ such that $p=x^2+11y^2$.
\item $a_{p}(f)\equiv{0}\bmod{2}$ and $p\equiv{1,3,4,5,9}\bmod{11}$.
\end{enumerate} 
\end{corollary}

We conclude the paper by interpreting Corollary \ref{CorollaryMain} in terms of {\it{Serre's modularity conjecture}} and representation theory, see Remarks \ref{DifferenceCurves} and \ref{remprep} respectively.  %motivating our choice for the weight and level 

%see Proposition \ref{ModularProp}.

%Throughout this paper we will freely use theorems and concepts from class field theory such as the Artin reciprocity law and the conductor of an abelian extension of number fields. For an introduction to this subject, the reader is referred to \cite{Ste1}, \cite{Lang1994}, \cite{Gras2003} and \cite{neu}.  % whose action is given explicitly by class field theory.  
%
%An analogous version of this theorem i%as it seems to be an artifact of the rather arbitrary definition of the Tribonacci sequence with respect to the initial values.  This will be particularly clear from the proof of Theorem \ref{MainTheorem}. \todo{Zin aanpassen wellicht} 

\section{Splitting behavior}\label{SectionSplittingBehavior}

Let $\alpha$ be a root of $f=x^3-x^2-x-1$ and consider the cubic number field $E=\mathbb{Q}(\alpha)$. The polynomial $f$ has discriminant $-44=-2^2\cdot 11$, and since $f\equiv (x-1)^3\bmod 2$ and $f(1)=-2\notin 2^2\mathbb{Z}$, we see that $\mathbb{Z}[\alpha]$ is regular and totally ramified over $2$ by %a theorem due to Kummer and Dedekind, see 
\cite[Theorem 3.1]{Ste2}. In particular we obtain $\mathbb{Z}[\alpha]=\mathcal{O}_E$ and $\Delta_E=\Delta(\mathcal{O}_E)=-44$.

The splitting field of $f$ over $\mathbb{Q}$ is $L=E(\sqrt{-11})$. With $K=\mathbb{Q}(\sqrt{-11})$, we have that $L/K$ is a cyclic extension of degree $3$ and we will apply class field theory to this extension.
\begin{center}
\begin{tikzcd}[every arrow/.append style={dash}]
%A \arrow[rd] \arrow[r, "\phi"]
& L \\
E \arrow{ur}{2} &  \\
& K \arrow[swap]{uu}{3} \\
\mathbb{Q} \arrow{uu}{3} \arrow{ur}{2}& % \arrow[u] \arrow[ur] &  
\end{tikzcd}
\end{center}
% \begin{tikzcd}
% ... & \\
% l  & \\
% %[every arrow/.append style={dash}]
% % & L \\
% %E\arrow{ur} & \\
% %& K\arrow{u} \\
% %\mathbb{Q}\arrow{u}\arrow{ur} & \\
% \end{tikzcd}
% \begin{tikzcd}[every arrow/.append style={dash}]
%  & L \\
% E\arrow{ur} & \\
% & K\arrow{u} \\
% \mathbb{Q}\arrow{u}\arrow{ur} & \\
% \end{tikzcd}
We wish to compute the conductor $\mathfrak{f}=\mathfrak{f}_{L/K}$. Any prime $p\neq 2,11$ is unramified in $E$ and hence also in its normal closure $L$, so $\mathrm{ord}_{\mathfrak{p}}(\mathfrak{f})=0$ for any prime $\mathfrak{p}$ of $K$ not lying over $2$ or $11$.\\

Now $2$ is inert in $K$ as $-11\equiv 5\bmod 8$ and it is totally ramified in $E$. As the ramification index is multiplicative in towers of field extensions, we conclude that the prime $(2)$ of $K$ ramifies in $L$. We then see that $(2)$ is tamely ramified in $L/K$ and thus
$\mathrm{ord}_{(2)}\mathfrak{f}=1$. Let $\mathfrak{p}_{11}$ denote the unique prime over $11$ in $K$. 
If $\mathfrak{p}_{11}$ were to ramify in $L$, then $11$ would be totally ramified in $E$. This would imply that the different $\mathfrak{D}_{E}$ is divisible by the square of the prime over $11$ in $E$ and this would then yield $\mathrm{ord}_{11}(\Delta_E)\geq 2$, which is not the case. One could also just factorize $f$ modulo $11$ to see that $11$ is not totally ramified. We conclude that $\mathfrak{p}_{11}$ is unramified in $L$. As $K$ is complex there are no infinite primes dividing $\mathfrak{f}$, so we conclude that $\mathfrak{f}=(2)$.

%If $\mathfrak{p}_{11}$ would also ramify in $L$ then $11$ would ramify totally in $E$, which would imply that the different $\mathfrak{D}_E$ is divisible by the square of the prime over $11$ in $E$. This would imply $\mathrm{ord}_{11}(\Delta_E)\geq 2$, which is not the case. Thus $\mathfrak{p}_{11}$ is unramified in $L$. As $K$ is complex there are no infinite primes dividing $\mathfrak{f}$ so we conclude that $\mathfrak{f}=(2)$.\\

This discussion implies that $L$ is contained in the ray class field $H_{\mathfrak{f}}$. The degree $[H_{\mathfrak{f}}:K]$ is the order of the ray class group $Cl_{\mathfrak{f}}$, which fits inside the exact sequence
\[
0\xrightarrow{}(\mathcal{O}/2)^{*}/\mathrm{im}[\mathcal{O}^{*}]\xrightarrow{}Cl_{\mathfrak{f}}\xrightarrow{} Cl_{K}\xrightarrow{} 0,
\]
where $\mathcal{O}=\mathcal{O}_K$. As $Cl_K=1$, $(\mathcal{O}/2)^{*}$ has order $3$ and $|Cl_{\mathfrak{f}}|$ divides $3$, we conclude that $Cl_{\mathfrak{f}}$ has order $3$, hence $L=H_{\mathfrak{f}}=H_2$. We now prove
\begin{theorem}\label{class}
With the notation as above, a prime $p\neq 11$ splits completely in $L$ if and only if $p=x^2+11y^2$ for some $x,y\in\mathbb{Z}$.
\end{theorem}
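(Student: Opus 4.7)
The plan is to decompose the splitting condition along the tower $\mathbb{Q}\subset K\subset L$. A prime $p$ splits completely in $L$ iff it splits completely in $K/\mathbb{Q}$ — which forces $p\neq 2$, since $2$ is inert in $K$, and $\left(\tfrac{-11}{p}\right)=1$ — and in addition each prime $\mathfrak{p}\mid p$ of $K$ splits completely in $L/K$. Since $L=H_2$, the Artin map identifies this second condition with $\mathfrak{p}$ lying in the trivial class of the ray class group $Cl_2$.

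I would next unpack that triviality. Because $Cl_K=1$, every prime $\mathfrak{p}\nmid 2$ is principal, say $\mathfrak{p}=(\pi)$. The exact sequence already established identifies the kernel of $Cl_2\to Cl_K$ with $(\mathcal{O}/2)^*/\mathrm{im}(\mathcal{O}^*)$; since $\mathcal{O}^*=\{\pm 1\}$ and both units reduce to $1$ modulo $2$, this image is trivial, so $Cl_2\cong(\mathcal{O}/2)^*\cong \mathbb{F}_4^*$, and $(\pi)$ is trivial in $Cl_2$ exactly when a generator can be chosen with $\pi\equiv 1\pmod{2}$ in $\mathcal{O}_K$. Writing $\omega=(1+\sqrt{-11})/2$ so that $\sqrt{-11}=2\omega-1$, any $\pi=x+y\sqrt{-11}$ becomes $(x-y)+2y\omega$, and conversely an arbitrary $\pi=u+v\omega\in\mathcal{O}_K$ satisfies $\pi\equiv 1\pmod{2}$ iff $u$ is odd and $v$ is even. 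Translating between these two bases shows that $\pi\equiv 1\pmod{2}$ is equivalent to $\pi\in\mathbb{Z}[\sqrt{-11}]$ with $x+y$ odd; in particular, an element of $\mathcal{O}_K$ congruent to $1$ modulo $2$ automatically lives in the sub-order $\mathbb{Z}[\sqrt{-11}]$.

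Finally I would match this to the form $p=x^2+11y^2$. If such $x,y$ exist, set $\pi=x+y\sqrt{-11}$, so $N(\pi)=p$ and $(\pi)$ is a prime of $K$ above $p$; because $p$ is odd and $p\equiv x+11y\equiv x+y\pmod{2}$, the parity condition $x+y$ odd is automatic, so $\pi\equiv 1\pmod{2}$ and the previous step gives complete splitting of $p$ in $L$. Conversely, if $p\neq 11$ splits completely in $L$, then a generator $\pi\equiv 1\pmod{2}$ of some prime above $p$ lies in $\mathbb{Z}[\sqrt{-11}]$, and $p=N(\pi)=x^2+11y^2$. The main obstacle in executing this plan is the translation performed in the middle paragraph between congruences modulo $2$ in $\mathcal{O}_K=\mathbb{Z}[\omega]$ and $\mathbb{Z}$-coordinates in the sub-order $\mathbb{Z}[\sqrt{-11}]$: everything else is formal class field theory, but this parity bookkeeping is the only place where the arithmetic of the discriminant $-11\equiv 1\pmod 4$ really enters.
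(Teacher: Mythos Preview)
Your proof is correct and follows essentially the same route as the paper's: both arguments use $L=H_2$ and the Artin map to reduce complete splitting of $\mathfrak{p}$ in $L/K$ to the existence of a generator $\pi\equiv 1\pmod{2\mathcal{O}_K}$, and both then observe that this congruence forces $\pi\in\mathbb{Z}[\sqrt{-11}]$ (the paper phrases this as ``$2a\in\mathbb{Z}[\sqrt{-11}]$'' where $\pi=1+2a$, while you carry out the explicit change of basis $\omega\leftrightarrow\sqrt{-11}$). The parity check $x+y$ odd from $p=x^2+11y^2$ odd is identical in both; the only cosmetic difference is that the paper verifies that \emph{both} primes $\mathfrak{p},\mathfrak{q}$ above $p$ split in $L$, whereas you implicitly use that $L/\mathbb{Q}$ is Galois so one suffices.
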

\begin{proof}
Since $L=H_{\mathfrak{f}}$ we have the Artin reciprocity law 
\[
Cl_{\mathfrak{f}}\xrightarrow{\sim}\mathrm{Gal}(H_{\mathfrak{f}}/K),
\]
which tells us that a prime $\mathfrak{p}\neq(2)$ of $K$ splits completely in $L$ if and only if $\mathfrak{p}=(x)$ for some $x\equiv 1\bmod 2\mathcal{O}_K$, see for example \cite[Theorem 1.12]{Ste1}. This will be the main ingredient of the proof.\\

$(\Rightarrow)$ Suppose that $p$ splits completely in $L$. Then $p$ splits in $K$ so there is a prime $\mathfrak{p}$ in $K$ of norm $p$ that splits in $L$. Because of the Artin reciprocity law, this implies that $\mathfrak{p}=(1+2a)$ for some $a\in\mathcal{O}_K$. But $2a\in\mathbb{Z}[\sqrt{-11}]$, so we can write $1+2a=x+y\sqrt{-11}$ for $x,y\in\mathbb{Z}$.  We then  conclude
\[
p=N_{K/\mathbb{Q}}(\mathfrak{p})=|N_{K/\mathbb{Q}}(1+2a)|=x^2+11y^2.
\]
$(\Leftarrow)$ Suppose that $p=x^2+11y^2$. We then have $p\mathcal{O}_K=\mathfrak{p}\mathfrak{q}$ for $\mathfrak{p}=(x+y(2a-1))$ and $\mathfrak{q}=(x-y(2a-1))$, where $a=\tfrac{1+\sqrt{-11}}{2}$. Clearly $\mathfrak{p}$ and $\mathfrak{q}$ are not $1$ so they both have norm $p$. In particular they are prime ideals. They are distinct because $p\neq 11$ so $p$ is split in $K$.\\
Reducing $p=x^2+11y^2$ modulo $2$ we see that $x+y\equiv 1\bmod 2$. Reducing the generators of $\mathfrak{p}$ and $\mathfrak{q}$ modulo $2\mathcal{O}_K$ then yields
\[
x\pm y(2a-1)\equiv x+y+2ay\equiv 1\bmod 2\mathcal{O}_K,
\]
so we conclude from the Artin reciprocity law that $\mathfrak{p}$ and $\mathfrak{q}$ split in $L$, hence $p$ splits completely in $L$.
\end{proof}
Note that a prime $p\neq 11$ splits completely in $E$ if and only if splits completely in the normal closure $L$. As $\mathcal{O}_E=\mathbb{Z}[\alpha]$, we see that the Kummer-Dedekind theorem yields that $x^3-x^2-x-1$ has three distinct roots in $\mathbb{F}_p$ if and only if $p$ splits completely in $E$. Thus the content of Theorem \ref{class} can be rephrased by saying that for a prime $p\neq 11$ we have
\[
x^3-x^2-x-1\text{ has three distinct roots modulo }p\quad\Leftrightarrow\quad p=x^2+11y^2\text{ for some }x,y\in\mathbb{Z}.
\]

\section{Tribonacci numbers}

The Tribonacci sequence $(T_{n})$
is defined by the recurrence relation
\begin{equation}\label{RecRel}
T_{n+3}=T_{n+2}+T_{n+1}+T_{n},
\end{equation}
with initial values $T_{0}=0$, $T_{1}=1$ and $T_{2}=1$. We label the three roots of $x^3-x^2-x-1$ by $\{\alpha,\beta,\gamma\}$. We then easily see that the space $V$ of all $L$-valued sequences $(x_{n})_{n\geq{0}}$ such that $x_{n+3}=x_{n+2}+x_{n+1}+x_{n}$ is a vector space over $L$. Moreover, we have the following %ny linear combination $c_{1}\alpha^{n}+c_{2}\beta^{n}+c_{3}\gamma^{n}$ also satisfies the recurrence relation. In fact, consider the vector space $V$ of all sequences $(x_{n})$ with $x_{n}\in{L}$ such that $x_{n+3}=x_{n+2}+x_{n+1}+x_{n}$. We then have
\begin{lemma}\label{VectorSpace}
$V$ is a three-dimensional vector space over $L$ with basis $\{(\alpha^{n}),(\beta^{n}),(\gamma^{n})\}$.
\end{lemma}
\begin{proof}
Every sequence $(x_{n})$ satisfying \ref{RecRel} is uniquely determined by its initial values $x_{0},x_{1}$ and $x_{2}$ in $L$. Since we are free to choose these, we see that $V$ is three-dimensional. To see that the proposed sequences form a basis, let $e_{1},e_{2},e_{3}$ be the unique sequences in $V$ determined respectively by the initial values  %\todo{let $e_1,e_2,e_3$ be the sequences determined respectively by the initial values (...) ipv wat er nu staat?}be the sequences starting with 
$(1,0,0)$, $(0,1,0)$ and $(0,0,1)$. These three sequences form a basis of $V$ and in terms of this basis %\todo{Huh? je hebt nog geen basis toch? ik snap niet wat er gaande is hier}, 
our sequences are given by the Vandermonde matrix
\begin{equation}\label{Vandermonde}A=
 \begin{pmatrix}
1 & 1 & 1 \\
\alpha &\beta & \gamma\\
\alpha^2 & \beta^2 & \gamma^2 
\end{pmatrix},
\end{equation}
which has determinant $(\alpha-\beta)(\alpha-\gamma)(\gamma-\beta)\neq{0}$. In other words, the sequences $(\alpha^{n}),(\beta^{n}),(\gamma^{n})$ form a basis for $V$, as desired. 
\end{proof}
It now follows that we can write $T_{n}=c_{1}\alpha^{n}+c_{2}\beta^{n}+c_{3}\gamma^{n}$ for $c_{i}\in{L}$. The following Lemma explicitly determines these $c_{i}$. 
\begin{lemma}\label{Constants}
We have
\begin{equation}
c_{1}=\dfrac{\alpha}{(\alpha-\beta)(\alpha-\gamma)},\,\,
c_{2}=\dfrac{\beta}{(\beta-\alpha)(\beta-\gamma)},\,\,
c_{3}=\dfrac{\gamma}{(\gamma-\alpha)(\gamma-\beta)}.
\end{equation}
\end{lemma}
\begin{proof}
Plugging in $T_{0}=0$, $T_{1}=1$ and $T_{2}=1$ in the equation $T_{n}=c_{1}\alpha^{n}+c_{2}\beta^{n}+c_{3}\gamma^{n}$, we obtain the linear system
\begin{align*}
&c_{1}+c_{2}+c_{3}=0\\
&c_{1}\alpha+c_{2}\beta+c_{3}\gamma=1,\\
&c_{1}\alpha^2+c_{2}\beta^{2}+c_{3}\gamma^{2}=1.
\end{align*} 
Using Cramer's rule, we then easily find the proposed solutions, as desired. 
\end{proof}
Let us write $$\delta=(\alpha-\beta)(\alpha-\gamma)(\beta-\gamma).$$ Note that $\delta^{2}=\Delta_{E}$ and $\delta=-\mathrm{det}(A)$, where $A$ is the Vandermonde matrix from Equation \ref{Vandermonde}. Then \begin{equation}\label{Formula12}
\delta\cdot{}T_{n}=\alpha^{n+1}(\beta-\gamma)-\beta^{n+1}(\alpha-\gamma)+\gamma^{n+1}(\alpha-\beta).
\end{equation}
Note that both sides of Equation \ref{Formula12} contain elements of $\mathcal{O}_{L}$, the ring of integers in $L$. This allows us to perform modular arithmetic on $T_{n}$ inside $\mathcal{O}_{L}$.

\begin{reptheorem}{MainTheorem}
Let $p$ be a prime number not equal to $11$ or $19$. Then $T_{p-1}$ is divisible by $p$ if and only if $p=x^2+11y^2$ for some $x,y\in\mathbb{Z}$.
\end{reptheorem}
\begin{proof}
Let $\mathfrak{p}$ be a prime of $L$ lying over $p$. For $p=2$, we clearly have that the theorem holds. We now suppose that $p\neq{2}$. Note that the elements occurring in Equation \ref{Formula12} are elements in $\mathcal{O}_{L}$, which we can reduce modulo $\mathfrak{p}$.
Doing this for $n=p-1$ yields 
\begin{equation}\label{LOL}
\delta\cdot{}T_{p-1}\equiv{\alpha^{p}(\beta-\gamma)-\beta^{p}(\alpha-\gamma)+\gamma^{p}(\alpha-\beta)}\bmod{\mathfrak{p}}.
\end{equation}

$(\Rightarrow)$ Suppose that $p=x^2+11y^2$. Theorem \ref{class} implies that $p$ splits completely in $L$, i.e. the decomposition group $D_{\mathfrak{p}}$ is trivial. The lift of the Frobenius automorphism of $\mathcal{O}_L/\mathfrak{p}=\mathbb{F}_p$ is then the identity so that
\begin{align*}
\delta\cdot{}T_{p-1}&\equiv{\alpha^{p}(\beta-\gamma)-\beta^{p}(\alpha-\gamma)+\gamma^{p}(\alpha-\beta)}\bmod{\mathfrak{p}}\\
&={\alpha(\beta-\gamma)-\beta(\alpha-\gamma)+\gamma(\alpha-\beta)}\bmod{\mathfrak{p}}\\
&=0\bmod{\mathfrak{p}}.
\end{align*}
Since $p$ is unramified in $E$ and thus in $L$, we have $\delta\not\equiv 0\bmod{\mathfrak{p}}$. We conclude that $T_{p-1}\equiv{0}\bmod{p}$.

$(\Leftarrow)$ Conversely, suppose that $T_{p-1}\equiv 0\bmod p$. We will show that $D_{\mathfrak{p}}=(1)$.  
Since $p$ is unramified in $L$, we have that the decomposition group $D_{\mathfrak{p}}$ is cyclic. This implies that $D_{\mathfrak{p}}\neq{S_{3}}$. We will now rule out $|D_{\mathfrak{p}}|=2$ and $|D_{\mathfrak{p}}|=3$.

%Let $\sigma$ be the automorphism given by $\sigma(\alpha)=\beta$ and $\sigma(\gamma)=\gamma$. Then $\tau{\sigma}\tau^{-1}\in{D_{\mathfrak{p}}}$ for some $\tau\in{S_{3}}$. We then have that $\tau\sigma\tau^{-1}\in{D_{\tau(\mathfrak{p})}}$, which maps to the Frobenius in $\mathcal{O}_{L}/\tau\mathfrak{p}$. Replacing $\mathfrak{p}$ by $\tau{\mathfrak{p}}$, we can then assume that $\sigma\in{D_{\mathfrak{p}}}$. This $\sigma$ then acts as the Frobenius automorphism on the residue field $k_{\mathfrak{p}}$. %, which maps onto the Frobenius automorphism. % and that $\sigma(x)\equiv{x^{p}}\bmod{\mathfrak{p}}$.    
%By a suitable conjugation, we can now assume that $\sigma\in{D_{\mathfrak{p}}}$. 
Suppose that $|D_{\mathfrak{p}}|=2$ and let $\sigma$ be the automorphism given by $\sigma(\alpha)=\beta$ and $\sigma(\gamma)=\gamma$. As Frobenius elements are conjugate, we may replace $\mathfrak{p}$ with $\tau\mathfrak{p}$ for some automorphism $\tau$ if necessary so that $\sigma=\mathrm{Frob}_{\mathfrak{p}}$. This means that $\alpha^{p}\equiv{\beta}\bmod{\mathfrak{p}}$, $\beta^{p}\equiv{\alpha}\bmod{\mathfrak{p}}$ and $\gamma^{p}\equiv{\gamma}\bmod{\mathfrak{p}}$. Since we also have $T_{p-1}\equiv 0\bmod\mathfrak{p}$ by assumption, Equation \ref{LOL} yields
\begin{equation}
\beta^{2}-\alpha^{2}+2\gamma(\alpha-\beta)\equiv{0}\bmod{\mathfrak{p}}.
\end{equation}
Since $\beta\not\equiv{\alpha}\bmod{\mathfrak{p}}$, we find that $\beta+\alpha\equiv{}2\gamma\bmod{\mathfrak{p}}$. With $\alpha+\beta+\gamma=1$, this results in $\gamma\equiv{}1/3\bmod{\mathfrak{p}}$. We also obtain $\alpha\beta\equiv{}3\bmod{\mathfrak{p}}$ and $\alpha+\beta\equiv{}2/3\bmod{\mathfrak{p}}$. We then have
\begin{equation}
-1=\alpha\beta+\alpha\gamma+\beta\gamma=\alpha\beta+(\alpha+\beta)\gamma\equiv 3+1/3\cdot{2/3}\bmod{\mathfrak{p}}=29/9\bmod{\mathfrak{p}}.
\end{equation}
We thus obtain $38\equiv{}0\bmod{\mathfrak{p}}$, which contradicts our assumption $p\neq{2,19}$. 

Now assume that $|D_{\mathfrak{p}}|=3$ and let $\sigma$ be the automorphism determined by
$\sigma(\alpha)=\beta$, $\sigma(\beta)=\gamma$ and $\sigma(\gamma)=\alpha$. Just as in the previous case we may assume without loss of generality that $\sigma=\mathrm{Frob}_{\mathfrak{p}}$. This gives $\alpha^p\equiv\beta\bmod\mathfrak{p}$, $\beta^{p}\equiv\gamma\bmod\mathfrak{p}$ and $\gamma^p\equiv\alpha\bmod\mathfrak{p}$. Using this on Equation \ref{LOL} results in 
\begin{equation}
\alpha^{2}+\beta^{2}+\gamma^{2}\equiv\alpha\beta+\alpha\gamma+\beta\gamma\bmod{\mathfrak{p}}=-1\bmod{\mathfrak{p}}.
\end{equation}
We use this to obtain %together with
\begin{equation}
1^2=(\alpha+\beta+\gamma)^2=\alpha^2+\beta^2+\gamma^2+2\alpha\beta+2\alpha\gamma+2\beta\gamma\equiv-3\bmod{\mathfrak{p}}.
\end{equation}
In other words, $p=2$, a contradiction. We conclude that $D_{\mathfrak{p}}$ is indeed trivial.\qedhere
\end{proof}

\section{A connection with modular forms}\label{Modularforms}

In this section, we give the splitting behavior of primes in the number field $\mathbb{Q}(\alpha)$ with $\alpha^3-\alpha^2-\alpha-1=0$ using the unique newform of weight $2$ and level $11$. In the process, we will demonstrate a particular case of Serre's conjecture on odd two-dimensional Galois representations, see Remark \ref{DifferenceCurves}. This approach was suggested to us by Prof. Jaap Top. For more background material, we refer the reader to \cite{Silv1} for an introduction to elliptic curves and \cite{ModularForms2016} for an introduction to modular forms. We would also like to refer the reader to \cite{Shimura1966}, where reciprocity laws similar to the one in Proposition \ref{ModularProp} were obtained using the unique newform of weight $2$ and level $11$. 	% and weight $2$. 

Throughout this section, $G_{\mathbb{Q}}=\mathrm{Gal}(\overline{\mathbb{Q}}/\mathbb{Q})$ will denote the absolute Galois group of $\mathbb{Q}$.
Let $A$ be the elliptic curve defined by the Weierstrass equation
\begin{equation}\label{EquationWeierstrass}
A:y^2+y=x^3-x^2-10x-20.
\end{equation}
This elliptic curve has conductor $11$ (it has semistable reduction at $11$ and good reduction at the other primes) and is in fact isomorphic to the modular curve $X_{0}(11)$, as shown in \cite{TWe1}. We will write $X_{0}(11)_{\mathbb{C}}=X_{0}(11)\times_{\mathrm{Spec}(\mathbb{Q})}{\mathrm{Spec}(\mathbb{C})}$ for the base change of $X_{0}(11)$ to $\mathrm{Spec}(\mathbb{C})$ and $X_{0}(11)^{\mathrm{an}}$ for the complex analytification of $X_{0}(11)_{\mathbb{C}}$ as in \cite{Serre1956}. Simply put, the equations that define $X_{0}(11)_{\mathbb{C}}$ also canonically define a compact Riemann surface and we denote that Riemann surface by $X_{0}(11)^{\mathrm{an}}$. We will write $\Omega(X_{0}(11)^{\mathrm{an}})$ for the sheaf of $1$-forms on the Riemann surface $X_{0}(11)^{\mathrm{an}}$ and $H^{0}(\Omega(X_{0}(11)^{\mathrm{an}}))$ for its global sections. These global sections are also known as the {\it{holomorphic differentials}} on $X_{0}(11)^{\mathrm{an}}$. 
%$X_{0}%Furthermore, let $\Omega$ be the sheaf of $1$-forms on the algebraic curve $X_{0}(11)_{\mathbb{C}}$ (see \cite[...]{liu2}) and let $\Omega^{\mathrm{an}}$ be the sheaf of $1$-forms on $X_{0}(11)^{\mathrm{an}}$. By \cite[D\'{e}finition 2]{Serre1956}, there is an analytification functor from the category of sheaves on $X_{0}(11)_{\mathbb{C}}$ to the category of sheaves on $X_{0}(11)^{\mathrm{an}}$ which we will denote by $\mathcal{F}\mapsto{\mathcal{F}^{h}}$. Under this functor we have a canonical identification $\Omega^{h}=\Omega^{\mathrm{an}}$. Indeed, these sheaves are both locally generated by $dz$, where $z$ is a uniformizer at a point $P$.   %Furthermore, gives an isomorphism of the coherent sheaves of $1$-forms
%\begin{equation}
%\Omega(X_{0}(11)_{\mathbb{C}})\simeq{\Omega(X_{0}(11)^{\mathrm{an}})}.
%\end{equation}
%We can thus identify
% This scheme is again irreducible, so we see that $X_{0}(11)$ is geometrically irreducible. Since smoothness and properness are stable under base change, we see that $X_{0}(11)_{\mathbb{C}}$ defines a smooth proper irreducibe curve over $\mathrm{Spec}(\mathbb{C})$.  Consider the space $\mathcal{S}_{2}(\Gamma_{0}(11))$ of modular forms of weight $2$ and level $11$ with respect to $\Gamma_{0}(11)$. 
By \cite[Corollary 2.17]{shimura1971}, %or the discussion in \cite[Section 3.6]{CSS1997}, 
there is an isomorphism 
\begin{equation}
\mathcal{S}_{2}(\Gamma_{0}(11))\rightarrow{H^{0}(\Omega(X_{0}(11)^{\mathrm{an}}))}.%\Omega(X_{0}(11)_{\mathbb{C}})},
\end{equation}
%where $\Omega(X_{0}(11)_{\mathbb{C}})$ is the space of holomorphic differentials on $X_{0}(11)_{\mathbb{C}}$. 
Since $X_{0}(11)^{\mathrm{an}}$ is an elliptic curve, this last space is one dimensional. To be explicit, it is generated as a $\mathbb{C}$-vector space by the holomorphic 1-form
\begin{equation}
\omega:=\dfrac{dx}{2y+1},
\end{equation}
where $x$ and $y$ are as in Equation \ref{EquationWeierstrass}. For a proof that $\omega$ is holomorphic, see \cite[Chapter 3, Proposition 1.5]{Silv1}. We find that there is exactly one {\it{normalized}} cusp form of weight $2$ and level $11$. Explicitly, it is given by
%There is one cusp-form of weight $2$ and level $11$ (Insert citation) given by
\begin{equation}\label{EquationModular}
f:=q{\prod_{n=1}^{\infty}(1-q^n)^2(1-q^{11n})^2}=\eta(\tau)^2\cdot\eta(11\tau)^2,
\end{equation}
where $q=e^{2\pi{i}\tau}$ and $\eta(\tau)$ is the Dedekind eta function. See \cite[Proposition 3.2.2]{ModularForms2016} for a more general result on these twisted products of eta functions.
Viewing $f$ as a power series in $q$, we write $f=\sum_{n=1}^{\infty}a_{n}q^{n}$. Note that we can recover the $a_{n}$ by considering finitely many terms in Equation \ref{EquationModular} (although this is a highly inefficient way of calculating them). Throughout this section, we will also use the notation $a_{n}(g)$ for the $n$-th coefficient of any newform $g$. \\

Let $a_{p}(A)=p+1-\#A(\mathbb{F}_{p})$, where $A$ is as in Equation \ref{EquationWeierstrass}. We have the following theorem, whose proof is mostly based on the famous {\it{Eichler\textendash{}Shimura relation}}, see \cite[Theorem 8.7.2]{ModularForms2016}.
\begin{theorem}
Let $X_{0}(N)\rightarrow{E}$ be a non\textendash{}constant morphism over $\mathbb{Q}$ from the modular curve $X_{0}(N)$ to an elliptic curve $E$, where $E$ has conductor $N_{E}$ (see \cite[Section 8.3]{ModularForms2016}). Then for some newform $g\in{\mathcal{S}_{2}(\Gamma_{0}(M_{g}))}$ where $M_{g}|N$, we have
\begin{equation}
a_{p}(g)=a_{p}(E)
\end{equation}
for all $p\nmid{N\cdot{}N_{E}}$.
\end{theorem}
\begin{proof}
See \cite[Theorem 8.8.2]{ModularForms2016}. We would also like to direct the reader to \cite[Section 6]{Shimura1966}, where this result was obtained for the unique newform $f$ of weight $2$ and level $11$ and the elliptic curve $A$ considered in Equation \ref{EquationWeierstrass}.
\end{proof}
\begin{corollary}\label{ModularityCorollary}
Let $f$ be as in Equation \ref{EquationModular} and let $A$ be the elliptic curve from Equation \ref{EquationWeierstrass}. Then for any prime number $p\neq{11}$, we have that
\begin{equation}
a_{p}(f)=a_{p}(A).
\end{equation}
\end{corollary}

\begin{comment}
Then by \cite[Section 6]{Shimura1966}, we find for any $p\neq{11}$ that
\begin{equation}
c_{p}=a_{p}.
\end{equation}
The main tool in obtaining this equality is the {\it{Eichler-Shimura}} relation. This relation states that on the reduction at $p$ of the Jacobian $J_{0}(11)$ of $X_{0}(11)$, we have the following equality of endomorphisms:
\begin{equation}
T_{p}=F+V,
\end{equation}
where $F$ is the {\it{Frobenius}} endomorphism and $V$ is the {\it{Verschiebung}} endomorphism (see \cite[Section 5]{Silv1} for these two endomorphisms). We have 
\begin{equation}
T_{p}(f)=c_{p}(f)
\end{equation}
for every
%In other words, $E$ is modular of level $11$ and the number of points of $E$ over $\mathbb{F}_{p}$ are given in terms of the Fourier coefficients of $f$.

\end{comment}

Let us now consider the elliptic curve $B$ defined by the Weierstrass equation
\begin{equation}\label{EllipticCurve2}
B:y^2=x^3-x^2-x-1.
\end{equation}
Consider the representation
\begin{equation}
\overline{\rho}_{B,2}:G_{\mathbb{Q}}\rightarrow{\mathrm{Aut}(B[2])}
\end{equation} of $G_{\mathbb{Q}}$ on the $2$-torsion of $B$. This representation is intimately connected to our number field $\mathbb{Q}(\alpha)$ and the corresponding Galois closure $L$. %, the Galois closure $L$ and G of our number field $K=\mathbb{Q}(\alpha)$. 
Let us explain this in more detail. The kernel of $\overline{\rho}_{B,2}$ corresponds to a finite field extension of $\mathbb{Q}$, which is obtained by adding the $x$ and $y$ coordinates of the non-infinite $2$-torsion points to $\mathbb{Q}$, see \cite[Section 6.3]{Silverman2015}. For any elliptic curve defined by a Weierstrass equation of the form $y^2=h(x)$, the $2$-torsion points are defined by the points with $y$ coordinate equal to $0$ by \cite[Theorem 2.1]{Silverman2015}, so we see that there is a natural bijection between the roots of $x^3-x^2-x-1$ and the set of nontrivial $2$-torsion points on $B$. Furthermore, the group $G_{\mathbb{Q}}$ acts on these $2$-torsion points $(\alpha,0)$ by
\begin{equation}
\sigma:(\alpha,0)\mapsto{(\sigma(\alpha),0)},
\end{equation}
so we obtain an isomorphism
\begin{equation}
G_{\mathbb{Q}}/\mathrm{ker}(\overline{\rho}_{B,2})\simeq{\mathrm{Gal}(L/\mathbb{Q})}\simeq{S_{3}}.
\end{equation}
We have that $G_{\mathbb{Q}}/\mathrm{ker}(\overline{\rho}_{B,2})$ injects into $\mathrm{Aut}(B[2])$, so by comparing orders we obtain $G_{\mathbb{Q}}/\mathrm{ker}(\overline{\rho}_{B,2})\simeq{\mathrm{Aut}(B[2])}$. We then obtain an identification of $\mathrm{Gal}(L/\mathbb{Q})$ with $\mathrm{GL}_{2}(\mathbb{F}_{2})$ by choosing a basis of $B[2]$. In other words, we can identify elements of the Galois group of $L/\mathbb{Q}$ with matrices in $\mathrm{GL}_{2}(\mathbb{F}_{2})$ after choosing a basis.

We now have the following Lemma:
\begin{lemma}\label{Isomorphism}
There exists an isomorphism of $G_{\mathbb{Q}}$-modules $A[2]\simeq{B[2]}$.
\end{lemma}
\begin{proof}
We will follow \cite[Section 3.1]{Silv1} to transform the long Weierstrass equation in Equation \ref{EquationWeierstrass} into a short Weierstrass equation.
We complete the square in Equation \ref{EquationWeierstrass} with $y'=y+1/2$ and obtain
\begin{equation}
y'^2=x^3-x^2-10x-20+1/4.
\end{equation}
Taking $x'=x-1/3$, we then obtain
\begin{equation}
y'^2=(x')^3 - 31/3\cdot{}x' - 2501/108.
\end{equation}
Using one final set of transformations given by $y''=2^3\cdot{3^3}\cdot{}y'$ and $x''=2^2\cdot{3^2}\cdot{x'}$ leads to the short Weierstrass form
%Taking 
%The elliptic curve $E$ has the short Weierstrass form 
\begin{equation}
y^2=x^3 - 13392{}x -1080432,
\end{equation}
where $y=y''$ and $x=x''$. As we saw before the Lemma, the $2$-torsion subgroup $X[2]$ of any elliptic curve $X$ in Weierstrass form $y^2=h(x)$ consists of the three points with $y$-coordinate equal to $0$ together with the point at infinity. We will use this to construct an isomorphism of $G_{\mathbb{Q}}$-modules $B[2]\rightarrow{A[2]}$. %, which is % define the $2$-torsion subgroup $X[2]$ of any elliptic curve $X$ in short Weierstrass form by [...]. %$A$ by [...].  

Consider the splitting fields $L_{h_{1}}$ and $L_{h_{2}}$ of the polynomials $h_{1}(x):=x^3-x^2-x-1$ and $h_{2}(x):=x^3 - 13392{}x -1080432$. By definition, we have $L_{h_{1}}=L$, where $L$ is as defined in Section \ref{SectionSplittingBehavior}. Writing $\alpha$ for a root of $x^3-x^2-x-1$, we easily find that $\alpha':=72\alpha^2-18\alpha-66$ satisfies $h_{2}(\alpha')=0$. This implies that $L_{h_{2}}=L$.  % and thus the splitting fields of $f$ and $g$ are the same.  %$\gamma^3 - 13392\gamma -1080432=0$.
We now explicitly give the desired isomorphism of ${G}_{\mathbb{Q}}$-modules $\phi:B[2]\rightarrow{A[2]}$. %. 
It is given by % is then given explicitly by
\begin{equation}
\phi:(\sigma(\alpha),0)\mapsto{(\sigma(\alpha'),0)}
\end{equation}
for any $\sigma\in{\mathrm{Gal}(L/\mathbb{Q})}$. Note that these are well-defined by the fact that the splitting fields of $h_{1}$ and $h_{2}$ are the same. An easy check then shows that $\phi$ is a bijective homomorphism that commutes with the action of $G_{\mathbb{Q}}$. We conclude that $\phi$ is an isomorphism of $G_{\mathbb{Q}}$-modules, as desired. %  and thus an isomorphism. %Since it is easily seen to commute with $G_{\} It is an isomorphism of $G_{\mathbb{Q}}$-modules since $\phi$ commutes with the action of $G_{\mathbb{Q}}$.  This concludes the proof. %This finishes the proof.  %. (Details)
\end{proof}

Now let $p$ be any prime not equal to $2,11$. Consider the $2$-adic Tate module $T_{2}(A)$ as defined in \cite[Section 3.7]{Silv1}. Choosing a $\mathbb{Z}_{2}$-basis $\{P_{1},P_{2}\}$ of $T_{2}(A)$, we obtain a representation 
\begin{equation}
\rho_{A,2}:G_{\mathbb{Q}}\rightarrow{\mathrm{GL}_{2}(\mathbb{Z}_{2})}.
\end{equation}
The images of $P_{1}$ and $P_{2}$ under the map $T_{2}(A)\rightarrow{A[2]}$ then give rise to a representation
%There is a natural reduction map  together with a reduced representation
\begin{equation}
\overline{\rho}_{A,2}:G_{\mathbb{Q}}\rightarrow{\mathrm{GL}_{2}(\mathbb{F}_{2})},
\end{equation}
which fits in a commutative diagram with the reduction map $r:\mathrm{GL}_{2}(\mathbb{Z}_{2})\rightarrow{\mathrm{GL}_{2}(\mathbb{F}_{2})}$ as follows:
%arising from the $2$-torsion subgroup $A[2]$. This gives a commutative diagram
\begin{equation}
\label{DiagramCommutative}
\begin{tikzcd}
G_{\mathbb{Q}} \arrow{r}{\rho_{A,2}} \arrow{dr}{\overline{\rho}_{A,2}} & \mathrm{GL}_{2}(\mathbb{Z}_{2}) \arrow{d}{r} \\
{} & \mathrm{GL}_{2}(\mathbb{F}_{2}).
\end{tikzcd}
\end{equation}
%We note that $\overline{\rho}_{A,2}$ induces isomorphisms
%\begin{equation}
%G_{\mathbb{Q}}/\mathrm{ker}(\overline{\rho}_{A,2})\simeq{\mathrm{Gal}(L/\mathbb{Q})}\simeq{\mathrm{GL}_{2}(\mathbb{F}_{2})}.
%\end{equation}
%The first isomorphism follows from infinite Galois theory, Lemma \ref{Isomorphism} and \cite[Theorem 2.1]{Silverman2015}. The second isomorphism follows since $G_{\mathbb{Q}}/\mathrm{ker}(\overline{\rho}_{A,2})$ injects into $\mathrm{GL}_{2}(\mathbb{F}_{2})$ and has order six, which is equal to the order of $\mathrm{GL}_{2}(\mathbb{F}_{2})$.  %the fact that the field extension $\mathbb{Q}(A[2])/\mathbb{Q}$ coincides with the splitting field of the polynomial 

%Let $\sigma:=\mathrm{Frob}_{p}$ be a Frobenius element. 
%and consider the Tate module $T_{2}(E)$.  

Let $D_{p}$ be a decomposition group in $G_{\mathbb{Q}}$ and let $\mathrm{Frob}_{p}$ be a Frobenius element in $D_{p}$. That is, $\mathrm{Frob}_{p}$ is an element of ${G}_{\mathbb{Q}}$ such that its image in the quotient
\begin{equation}
D_{p}/I_{p}\simeq{\mathrm{Gal}(\overline{\mathbb{F}}_{p}/\mathbb{F}_{p})}
\end{equation}
is the Frobenius automorphism $x\mapsto{x^{p}}$. We note that the restriction of $\mathrm{Frob}_{p}$ to $L$ (which is the Galois closure of $\mathbb{Q}(\alpha)/\mathbb{Q}$) is a Frobenius element above $p$ of the Galois group of $L$ over $\mathbb{Q}$. %By \cite[...]{Silv1}, we have that $I_{\mathfrak{p}}$ is in the kernel of $\rho$ for $p\neq{2,11}$. 
We now have the following two standard facts on elliptic curves and number fields:
\begin{lemma}\label{EllipticCurveLemma}
Let $p$ be a prime number not equal to $2$ or $11$. Then 
$$\mathrm{tr}(\overline{\rho}_{A,2}(\mathrm{Frob}_{p}))\equiv{}a_{p}(A)\bmod{2}.$$ %$\mathrm{det}(\rho(\mathrm{Frob}_{\mathfrak{p}}))=\chi(p)$, where $\chi$ is the quadratic character $(\mathbb{Z}/11\mathbb{Z})^{*}\rightarrow{\{\pm{1}\}}$. %p\bmod{2}=1\bmod{2}$. 
\end{lemma}	
\begin{proof}
By \cite[Theorem 9.4.1]{ModularForms2016}, we have that
\begin{equation}
\mathrm{tr}(\rho_{A,2}(\mathrm{Frob}_{p}))=a_{p}(A).
\end{equation}
Using the commutative diagram in Equation \ref{DiagramCommutative}, we then obtain
\begin{equation}
\mathrm{tr}(\overline{\rho}_{A,2}(\mathrm{Frob}_{p}))\equiv{}a_{p}(A)\bmod{2}.
\end{equation}
This finishes the proof.
%for the lifted version to $\mathbb{Z}_{2}$. Let $\{P_{1},P_{2}\}$ be a $\mathbb{Z}_{2}$-basis of the Tate module $T_{2}(E)$ reducing to the $\mathbb{F}_{2}$-basis $\{P_{1},P_{2}\}$. The Galois group $G_{\mathbb{Q}}$ then acts on $T_{2}(E)$ and using this basis, %\todo{is 'this basis' niet beter dan 'the basis above'?} 
%we obtain a representation $\rho_{2}:G_{\mathbb{Q}}\rightarrow{}{\mathrm{GL}_{2}(\mathbb{Z}_{2})}$. % There, one considers a representation $\rho_{2}:G_{\mathbb{Q}}\rightarrow{\mathrm{GL}_{2}(\mathbb{Z}_{2})}$ of $G_{\mathbb{Q}}$ on the Tate module $T_{2}(E)$ for a suitable $\mathbb{Z}_{2}$-basis. We can and do assume that this basis reduces to our $\mathbb{F}_{2}$-basis $\{P_{1},P_{2}\}$ of $E[2]$. 
%Using the natural reduction map $\mathrm{GL}_{2}(\mathbb{Z}_{2})\rightarrow{\mathrm{GL}_{2}(\mathbb{F}_{2})}$, we now find the following commutative diagram % for the natural reduction map, we find that $\rho=r\circ{}\rho_{2}$.

%\todo{Is het niet geil om hier gewoon toch een diagrammetje van te maken? Ik wil wel even prutsen?} 
%Using Theorem \cite[Theorem 9.4.1]{ModularForms2016}, we immediately obtain the statement of the Lemma.  % Our version follows by taking the characteristic polynomial $\bmod\,{2}$. Here we use that the trace of a matrix is equal to the sum of its eigenvalues.
\end{proof}
\begin{lemma}\label{EasyLemma}
$\overline{\rho}_{A,2}(\mathrm{Frob}_{p})=\mathrm{Id}$ if and only if $p$ splits completely in $L$.
\end{lemma}
\begin{proof}
This follows from the earlier mentioned fact that the restriction of a Frobenius element is again a Frobenius element. 
\end{proof}
Let $\chi:(\mathbb{Z}/11\mathbb{Z})^{*}\rightarrow{\{\pm{1}\}}$ be the unique nontrivial quadratic character. It is given by 
\begin{align*}
\chi(\overline{n})&=1 \text{ if }n\equiv{1,3,4,5,9}\bmod{11},\\
\chi(\overline{n})&=-1 \text{ if }n\equiv{2,6,7,8,10}\bmod{11}.
\end{align*}
Since $\mathbb{Q}(\sqrt{-11})\subseteq{\mathbb{Q}({\zeta_{11}})}$, we obtain that $p$ splits completely in $\mathbb{Q}(\sqrt{-11})$ if and only if $\chi(\overline{p})=1$. 
We then obtain the following criterion for $p$ to split completely in $L$:
\begin{proposition}\label{ModularProp}
A prime $p$ not equal to $2$ or $11$ splits completely in $L$ if and only if $a_{p}(f)=a_{p}(A)\equiv{0}\bmod{2}$ and $p\equiv{1,3,4,5,9}\bmod{11}$. %. $\chi(\overline{p})=1$. %$p\equiv{1,3,4,...}\bmod{11}$.
\end{proposition}
\begin{proof}
Suppose that $a_{p}(f)\equiv{0}\bmod{2}$ and $p\equiv{1,3,4,5,9}\bmod{11}$. Using Lemma \ref{EllipticCurveLemma} and Corollary \ref{ModularityCorollary}, we see that $\mathrm{tr}(\overline{\rho}_{A,2}(\mathrm{Frob}_{p}))\equiv{}0\bmod{2}$.  The only invertible $2\times{2}$-matrices over $\mathbb{F}_{2}$ having trace zero are given by
\begin{equation}\label{MatricesTrace}
\begin{pmatrix}
1 & 0 \\
0 & 1 
\end{pmatrix}, \begin{pmatrix}
0 & 1 \\
1 & 0 
\end{pmatrix},
\begin{pmatrix}
1 & 0 \\
1 & 1 
\end{pmatrix},
\begin{pmatrix}
1 & 1 \\
0 & 1 
\end{pmatrix}.
\end{equation}
The non-identity matrices all correspond to elements of the Galois group that restrict to nontrivial elements of $\mathrm{Gal}(\mathbb{Q}(\sqrt{-11})/\mathbb{Q})$. Using $\chi(\overline{p})=1$, we see that $\overline{\rho}_{A,2}(\mathrm{Frob}_{p})$ must be the identity matrix.  %the image of $\rho(\mathrm{Frob}_{\mathfrak{p}})$ under this quotient map must be the identity.
%These matrices correspond exactly to the elements of the Galois group of $\mathbb{Q}(\sqrt{-11})/\mathbb{Q}$. By $\chi(p)=1$, we then obtain that $. 
By Lemma \ref{EasyLemma}, we conclude that $p$ splits completely in $L$, as desired.

Conversely, suppose that $p$ splits completely in $L$. Then the Frobenius element above $p$ is the identity. In other words, $\overline{\rho}_{A,2}(\mathrm{Frob}_{p})=\mathrm{Id}$, which yields the equation $\mathrm{tr}(\overline{\rho}_{A,2}(\mathrm{Frob}_{p}))\equiv{}0\bmod{2}$. Since $p$ splits completely in $L$, it also splits completely in $K$, meaning that $p\equiv{1,3,4,5,9}\bmod{11}$ by the quadratic reciprocity law, as desired. % as desired.
\end{proof}
\begin{repcorollary}{CorollaryMain}
Let $p$ be a prime not equal to $2,11$ or $19$, let $T_{n}$ be the $n$-th Tribonacci number and let $a_{n}(f)$ be the $n$-th Fourier coefficient of the unique newform of weight $2$ and level $11$. Then the following are equivalent:
\begin{enumerate}
\item $T_{p-1}\equiv{0}\bmod{p}$.
\item There exist $x$ and $y$ in $\mathbb{Z}$ such that $p=x^2+11y^2$.
\item $a_{p}(f)\equiv{0}\bmod{2}$ and $p\equiv{1,3,4,5,9}\bmod{11}$.
\end{enumerate} %if and only if   Furthermore, this happens if and only if % for $x$ and $y$ in $\mathbb{Z}$.
\end{repcorollary}

\begin{remark}\label{DifferenceCurves}{\it{(Serre's conjecture)}}
We now explain our choice for $A$ and $B$. We chose $B$ because the $x$ coordinates of the $2$-torsion of $B$ define the Galois closure $L$ of the number field $K$ over $\mathbb{Q}$. Our choice for $A$ has to do with Serre's conjecture on odd irreducible two-dimensional continuous representations $\rho:G_{\mathbb{Q}}\rightarrow{\mathrm{GL}_{2}(\overline{\mathbb{F}}_{p})}$, which we now recall. For more background information, we refer the reader to \cite[Chapter 1]{SteinRibet1}, \cite[Section 9.6]{ModularForms2016} and \cite[Chapter 2]{CompGalois}. 

Let $f=\sum_{n\geq{1}}a_{n}q^{n}$ be a normalized eigenform in the space $\mathcal{S}_{k}(\Gamma_{1}(N))$ of complex weight-$k$ cusp forms on the subgroup $\Gamma_{1}(N)$ of $\mathrm{SL}_{2}(\mathbb{Z})$, where $k\geq{2}$ and $N\geq{1}$. We thus have that $a_{1}=1$ and $f|T_{n}=a_{n}\cdot{f}$ for all $n\geq{1}$, where $T_{n}$ is the $n$-th Hecke operator. There furthermore is a character $\chi:(\mathbb{Z}/N\mathbb{Z})^{*}\rightarrow{\mathbb{C}^{*}}$ that describes the action of the diamond operator $\langle{d}\rangle$ on $f$ by $ f|\langle{d}\rangle=\chi(d)\cdot{f}$. We will write $S_{k}(\Gamma_{1}(N),\chi)$ for the cusp forms that have character $\chi$. Let $\mathbf{K}_{f}$ be the field generated by the Fourier coefficients $a_{n}$ of $f$ over $\mathbb{Q}$. This is a finite extension of $\mathbb{Q}$, see \cite[Page 234]{ModularForms2016}. We will write $\mathcal{O}_{\mathbf{K}_{f}}$ for the ring of integers in $\mathbf{K}_{f}$. For any maximal ideal $\lambda$ in $\mathcal{O}_{\mathbf{K}_{f}}$, we write $\mathbf{K}_{f,\lambda}$ for the completion of $\mathbf{K}_{f}$ with respect to the absolute value induced by $\lambda$ and $\mathcal{O}_{\mathbf{K}_{f,\lambda}}$ for the corresponding ring of integers. We now have the following theorem.
\begin{theorem}\label{RepresentationEigenform}
Let $f\in{S_{k}(\Gamma_{1}(N),\chi)}$ be a normalized eigenform with number field $\mathbf{K}_{f}$. Let $\ell$ be a prime. For each maximal ideal $\lambda$ of $\mathcal{O}_{\mathbf{K}_{f}}$ lying over $\ell$, there is an irreducible continuous two-dimensional Galois representation
\begin{equation}
\rho_{f,\lambda}:G_{\mathbb{Q}}\rightarrow{\mathrm{GL}_{2}(\mathbf{K}_{f,\lambda})}.
\end{equation}
This representation is unramified at primes $p\nmid{\ell{N}}$. For any such $p$, let  $\mathrm{Frob}_{p}\in{G_{\mathbb{Q}}}$ be a Frobenius automorphism. Then the characteristic polynomial of $\rho_{f,\lambda}(\mathrm{Frob}_{p})$ is given by
\begin{equation}\label{Characteristic}
x^2-a_{p}(f)x+\chi(p)p^{k-1}.
\end{equation}
% with %$\mathfrak{p}\in\mathrm{Spec}(
\end{theorem}
\begin{proof}
See \cite[Theorem 9.6.5]{ModularForms2016} and \cite[Theorem 2.4.1]{CompGalois}. 
\end{proof}
%As indicated in \cite[Section 2.4]{CompGalois} and \cite[Section 9.6]{ModularForms2016}, for $k\geq{2}$ the representation $\rho_{f,\lambda}$ can be realized in the $(k-1)$-th $\ell$-adic \'{e}tale cohomology of a variety of dimension $k-1$. 
Note that Equation \ref{Characteristic} implies that 
$\mathrm{tr}(\rho_{f,\lambda}(\mathrm{Frob}_{p}))=a_{p}(f)$ and $\mathrm{det}(\rho_{f,\lambda}(\mathrm{Frob}_{p}))=\chi(p)p^{k-1}$.
By a simple compactness argument as given in \cite[Proposition 9.3.5]{ModularForms2016}, we can find a basis of the $\mathbf{K}_{f,\lambda}$-vector space $V:=(\mathbf{K}_{f,\lambda})^{2}$ such that $\rho_{f,\lambda}$ is similar to a representation $\rho'_{f,\lambda}:G_{\mathbb{Q}}\rightarrow{\mathrm{GL}_{2}(\mathcal{O}_{\mathbf{K}_{f,\lambda}})}$. We again write $\rho_{f,\lambda}$ for this representation. We then automatically obtain a reduced representation
\begin{equation}
\overline{\rho}_{f,\lambda}:G_{\mathbb{Q}}\rightarrow{\mathrm{GL}_{2}(\mathcal{O}_{\mathbf{K}_{f,\lambda}}/\lambda\mathcal{O}_{\mathbf{K}_{f,\lambda}})}.
\end{equation}
Viewing the residue field $\mathcal{O}_{\mathbf{K}_{f,\lambda}}/\lambda\mathcal{O}_{\mathbf{K}_{f,\lambda}}$ as a subfield of $\overline{\mathbb{F}}_{\ell}$, we thus obtain a representation $\overline{\rho}'_{f,\lambda}:G_{\mathbb{Q}}\rightarrow{\mathrm{GL}_{2}(\overline{\mathbb{F}}_{\ell})}$. We will also denote this representation by $\overline{\rho}_{f,\lambda}$. % (where we again used the notation $\overline{\rho}_{f,\lambda}$ for this new representation).

We now consider the "inverse problem" of the construction above. Let $\rho:G_{\mathbb{Q}}\rightarrow{\mathrm{GL}_{2}(\overline{\mathbb{F}}_{\ell})}$ be any odd, continuous, irreducible representation. By odd, we mean that the the image of a complex conjugation automorphism $c\in{G_{\mathbb{Q}}}$ under $\mathrm{det}\circ{\rho}$ is equal to $-1$. We can attach a level $N(\rho)$ to $\rho$ as follows (see \cite[Section 1.3]{SteinRibet1} and \cite[Section 3.1]{Cais2007}). It is a product
\begin{equation}
N(\rho)=\prod_{p\neq{\ell}}p^{n_{p}},
\end{equation}
where the $n_{p}$ are defined as follows. Let $I_{p}$ be an inertia group above $p$ and let $V=\overline{\mathbb{F}}_{\ell}^{2}$ be the vector space on which $G_{\mathbb{Q}}$ acts. Let
\begin{equation}
V^{I_{{p}}}=\{v\in{V}:\rho(\sigma)(v)=v\text{ for all }\sigma\in{I_{p}}\}.
\end{equation}
Then 
\begin{equation}
n_{p}=\mathrm{dim}(V/V^{I_{p}})+\mathrm{Swan}(V),
\end{equation}
where $\mathrm{Swan}(V)$ is the wild part of $\rho$, which is given by
\begin{equation}
\mathrm{Swan}(V)=\sum_{i=0}^{\infty}\dfrac{1}{[G_{0}:G_{i}]}\cdot{\mathrm{dim}(V/V^{G_{i}})}.
\end{equation}
Here $G_{0}=I_{p}$ and the $G_{i}$ are the higher ramification groups. The weight $k(\rho)$ corresponding to $\rho$ is somewhat harder to define, so we refer the reader to \cite[Chapter 2]{SteinRibet1}, \cite[Section 5.4]{Cais2007} and \cite[\S{}2]{Ser1987}. %Let us just state that if there exists a finite flat $\mathbb{F}_{\ell}$-vector space group scheme $\mathcal{A}$ over $\mathcal{O}_{\mathbb{Q}^{\mathrm{nr}}_{\ell}}$ such that the representation $\rho_{\mathcal{A}}$ corresponding to it is isomorphic to $\rho$, then $\rho$ has weight $2$.

We can now state Serre's conjecture on odd irreducible two-dimensional representations of $G_{\mathbb{Q}}$:
\begin{theorem}\label{Serreconjecture}{\it{(Serre's conjecture)}}
Let $\rho:G_{\mathbb{Q}}\rightarrow{\mathrm{GL}_{2}(\overline{\mathbb{F}}_{\ell})}$ be an odd, continuous, irreducible representation of $G_{\mathbb{Q}}$. Then $\rho$ is isomorphic to $\overline{\rho}_{f,\lambda}$ for some $f$ and $\lambda$ as in Theorem \ref{RepresentationEigenform}. Furthermore, the weight and level of $f$ are equal to $k(\rho)$ and $N(\rho)$ respectively.
\end{theorem} 
\begin{proof}
This was proved by Khare and Wintenberger in \cite{Khare2009} and \cite{Khare20092}.
\end{proof}

Let us now state the relevance of Theorem \ref{Serreconjecture} in finding the normalized eigenform $f$ as in Equation \ref{EquationModular} and the elliptic curve $A$. Let $B$ be the elliptic curve as defined in Equation \ref{EllipticCurve2}. We will write $\rho$ for the representation % Then the fixed field of the kernel of the representation 
$\overline{\rho}_{B,2}:G_{\mathbb{Q}}\rightarrow{\mathrm{GL}_{2}(\mathbb{F}_{2})}$ induced by the $2$-torsion points of $B$. The fixed field of the kernel of $\rho$ is then equal to $L$. %We will also write $\rho$ for this representation. 
By Theorem \ref{Serreconjecture}, there exists some normalized eigenform $f$ with a maximal ideal $\lambda$ of $\mathcal{O}_{\mathbf{K}_{f}}$ such that $\rho_{f,\lambda}$ is isomorphic to $\rho$. Let us calculate the level $N(\rho)$. Note that for any prime not equal to $2$ or $11$, the representation is unramified and thus $n(p)=0$. By unramified, we mean that the image of any inertia group $I_{p}$ is trivial under $\rho_{B,2}$.  By definition, we have that $N(\rho)$ contains no powers of $2$, so $N(\rho)$ is a power of $11$. More explicitly, let $I_{11}$ be an inertia group above $11$. Then its image in $\mathrm{GL}_{2}(\mathbb{F}_{2})$ has order $2$, since $11$ ramifies in the quadratic extension $\mathbb{Q}\subset{K}$ and not in the cubic extension $K\subset{L}$. Furthermore, there is exactly one $2$-torsion point $P$ that it fixes (one can invoke the orbit-stabilizer theorem from group theory here), so we find that $\mathrm{dim}(V^{I_{11}})=1$ and consequently $n(11)=1$. Indeed, the higher ramification groups are all trivial, so the Swan conductor is trivial.

The weight $k(\rho)$ can be calculated as follows. We refer the reader to \cite[Section 5.3]{Cais2007} for the definition and the relevant concepts.  The image $\rho(I_{2})$ of an inertia group $I_{2}$ above $2$ has order three, see our calculations in Section \ref{SectionSplittingBehavior}. In $\mathrm{GL}_{2}(\mathbb{F}_{2^{2}})$, we can then write the restriction as
\begin{equation}
\rho|_{I_{2}}=
\begin{pmatrix}
\chi_{3} & 0 \\
0 & \chi_{3}^{2} 
\end{pmatrix},
\end{equation}
where $\chi_{3}$ is a cyclotomic character of $\mathbb{F}_{2^{2}}^{*}$ of order three. That is to say, write $M=\rho(\sigma)$ for a $\sigma\in{I_{2}}$ mapping to an element of order three. Then $M$ satisfies $M^2+M+I=0$ by the Cayley\textendash{}Hamilton theorem from linear algebra. Let $\lambda\in{\mathbb{F}_{2^{2}}}$ satisfy $\lambda^2+\lambda+1=0$ and consider an element $v$ such that $Mv=\lambda{v}$. With respect to the basis $\{{v},\bar{v}\}$, $M$ is then given by
\begin{equation}
M=\begin{pmatrix}
\lambda & 0 \\
0 & \lambda^2 
\end{pmatrix}.
\end{equation}
Here $\bar{v}$ is the conjugate of $v$ under the action of $\mathrm{Gal}(\mathbb{F}_{2^{2}}/\mathbb{F}_{2})$. We now see that the restriction of the representation $\rho$ to an inertia group $I_{2}$ is described by two characters of level $2$ (as defined in \cite[Definition 3.3.2]{Cais2007}). We conclude from \cite[Definition 5.4.1]{Cais2007} that the optimal weight $k(\rho)$ is equal to $2$. This case is also known as the {\it{supersingular case}}, see \cite[Section 2]{SteinRibet1}. 

By Theorem \ref{Serreconjecture}, we now know that there is a normalized eigenform $f$ of level $11$ and weight $2$ such that $\overline{\rho}_{f,\lambda}$ (as given by Theorem \ref{RepresentationEigenform}) is isomorphic to our representation $\rho$. %Using %For the notation regarding the equality of representations, see \cite{Cais2007}.%\footnote{One can in fact easily conclude that $k(\rho)=2$ in this case using the fact that $\rho$ comes from a finite flat group scheme, namely the closure of the $2$-torsion of $A$ in its N\'{e}ron model $\mathcal{A}$ over $\mathbb{Z}_{2}$. Note that $A$ has good reduction at $2$, so $\mathcal{A}$ is an abelian scheme over $\mathbb{Z}_{2}$. }   
%Elliptic curves are a good source for such forms, so 
We are thus naturally led to consider the curves $X_{0}(11)$ and $X_{1}(11)$. The curve $X_{0}(11)$ is exactly our curve $A$ and by Lemma \ref{Isomorphism}, we find that the corresponding representation $\overline{\rho}_{A,2}$ coincides with $\overline{\rho}_{B,2}$. %Using some representation theory, one can show that the representation $\overline{\rho}_{f,\lambda}$ given by Theorem \ref{RepresentationEigenform} coincides with $\rho_{B,2}$. 
We thus see that our choice for $A$ is quite natural from the viewpoint of Serre's conjecture.

These ideas give a general starting point for finding normalized cusp forms that give reciprocity laws for number fields $L_{h}$ defined by a cubic polynomial $h(x)$ over $\mathbb{Q}$ with Galois group equal to $S_{3}$. % whose splitting fields $L_{h}$ over $\mathbb{Q}$ have Galois group $S_{3}$. 
We first %determine the ramifying primes in $L_{h}$. 
determine the level $N(\rho)$ of the representation $\rho:G_{\mathbb{Q}}\rightarrow{\mathrm{GL}_{2}(\mathbb{F}_{2})}$ arising from the $2$-torsion of the elliptic curve $X$ given in Weierstrass form by
\begin{equation}
X:y^2=h(x).
\end{equation}
We then calculate the optimal weight $k(\rho)$ as in \cite[Section 5.4]{Cais2007} and \cite[Section 2]{SteinRibet1}. Using Theorem \ref{Serreconjecture}, we see that there exists a normalized eigenform $f$ of weight $k(\rho)$ and level $N(\rho)$ whose corresponding representation as in Theorem \ref{RepresentationEigenform} is equivalent to ours. Together with a version of quadratic reciprocity on the quadratic subfield (compare with the condition $p\equiv{1,3,4,5,9}\bmod{11}$ in Proposition \ref{ModularProp}), we then obtain a reciprocity law for primes $p$ to split in $L$ as in Proposition \ref{ModularProp}. %This eigenform then gives reciprocity laws fo 

\end{remark}

\begin{remark}\label{remprep}
{\it{(Interpretation in terms of representation theory)}}
The matrices appearing in Equation \ref{MatricesTrace} form two conjugacy classes. Using the relation $\mathrm{tr}(\rho(\mathrm{Frob}_{p}))\equiv{}0\bmod{2}$, we were able prove that the conjugacy class of a Frobenius automorphism equals one of these two classes. To be able to conclude that the conjugacy class of $\mathrm{Frob}_{p}$ is the conjugacy class of the identity matrix, we needed the extra condition afforded by the unique nontrivial quadratic character of $(\mathbb{Z}/11\mathbb{Z})^{*}$. 

We now interpret these relations in terms of representation theory. The group in question is the Galois group of $L/\mathbb{Q}$, which is isomorphic to $S_{3}$. This group has three irreducible representations, two of degree one and one of degree two. To determine the splitting behavior of a prime, it suffices to know the conjugacy class of a Frobenius automorphism above $p$. We do this using the concept of {\it{class functions}}. These are functions 
\begin{equation}
f:G\rightarrow{\mathbb{C}}
\end{equation}
such that $f(sts^{-1})=f(t)$ for every $s$ and $t$ in $G$, see \cite[Chapter 2]{Serre1977}. A basic result in representation theory then says that the characters of the irreducible representations form an orthonormal basis for the vector space of all class functions, see \cite[Page 19, Theorem 6]{Serre1977}. We can thus write every class function as a unique linear combination of these characters
\begin{equation}
f=\sum_{i}c_{i}\chi_{i}.
\end{equation}

In particular, we can write the indicator function of any conjugacy class $C_{i}\subset{G}$ as a linear combination of the characters of the irreducible representations.
Suppose now that we know $\chi_{i}(\mathrm{Frob}_{p})$ for every $i$. We can then evaluate the indicator functions $1_{C_{i}}$ of the conjugacy classes $C_{i}$ at $\mathrm{Frob}_{p}$. The resulting data then immediately tells us what conjugacy class % which tell us what conjugacy class 
$\mathrm{Frob}_{p}$ is in. 

There is a slight problem however in the connection between our representation $\rho:G\rightarrow{\mathrm{GL}_{2}(\mathbb{F}_{2})}$ and the theory mentioned above: this representation $\rho$ is in characteristic two. One can however embed $\mathrm{GL}_{2}(\mathbb{F}_{2})$ in $\mathrm{GL}_{2}(\mathbb{Z})$, which also gives an embedding into $\mathrm{GL}_{2}(\mathbb{C})$, see \cite[Chapter 9, Page 371]{ModularForms2016} for the embedding.

At any rate, we now clearly see why we needed the second condition on $p$ afforded by the quadratic character: it corresponds to the other nontrivial irreducible representation of $S_{3}$. Together with the irreducible representation of degree two, this completely determines the conjugacy class of the Frobenius automorphism. 
%and consider for every conjugacy class $C_{i}$ its indicator function $1_{C_{i}}$. Writing these indicator functions as linear combinations of the $\chi_{i}$, we see that we can pinpoint the conjugacy class that $\mathrm{Frob}_{\mathfrak{p}}$ is in.
% Then we know the conjugacy class that $\mathrm{Frob}_{\mathfrak{p}}$ is in by considering the indicator function ${1}_{\mathrm{Frob}_{\mathfrak{p}}}$ on the corresponding conjugacy class.
\end{remark}

\subsubsection*{Acknowledgements.} 
The authors would like to thank Associate Professor Burkard Polster (from {\it{"Mathologer"}}) for bringing these numbers under their attention through his video on YouTube on Tribonacci numbers. The authors would also like to thank Prof. Jaap Top for pointing out the connection with modular forms and the unique newform of weight $2$ and level $11$ and the referees for their comments and remarks, in particular for suggesting Lemmas \ref{VectorSpace} and \ref{Constants}. % In particular, the authors would like to thank the referees for suggesting Lemmas \ref{VectorSpace} and \ref{Constants}, which h 
\bibliographystyle{alpha}
\bibliography{bibfiles}{}

\end{document}